\documentclass[10pt,a4paper]{article}
\usepackage[utf8]{inputenc}
\usepackage{a4wide}
\usepackage{amsmath}
\usepackage{amsfonts}
\usepackage{amssymb}
\usepackage{graphicx}
\usepackage{hyperref}
\usepackage{enumitem}
\usepackage{color}

\author{Benne de Weger \\ {\small Eindhoven University of Technology, Eindhoven, The Netherlands} \\
    {\small\url{b.m.m.d.weger@tue.nl}}}
\title{On the approximation gain for $ abc $-triples}
\newcommand{\version}{1.3 (arXiv version 2)}
\date{version \version, \today}

\newcommand{\Z}{\mathbb{Z}}

\newcommand{\dsum}{\displaystyle\sum}
\DeclareMathOperator{\rad}{rad}
\DeclareMathOperator{\qu}{qu}
\DeclareMathOperator{\rrad}{rrad}
\DeclareMathOperator{\rag}{rag}
\DeclareMathOperator{\rpg}{rpg}
\DeclareMathOperator{\mrad}{mrad}
\DeclareMathOperator{\mpag}{mag}
\DeclareMathOperator{\mpg}{mpg}
\DeclareMathOperator{\srad}{srad}
\DeclareMathOperator{\sag}{sag}
\DeclareMathOperator{\spg}{spg}
\DeclareMathOperator{\cag}{cag}
\DeclareMathOperator{\cpg}{cpg}

\newtheorem{definition}{Definition}
\newtheorem{theorem}{Theorem}

\reversemarginpar

\begin{document}

\maketitle

\begin{abstract}
The concept of \emph{approximation gain} was introduced recently by M\"uller and Taktikos for some
$ abc $-triples related to convergents of surds, where there is a relatively large gap between
$ \min\{a, b, c\} $ and $ \max\{a, b, c\} $. This note proposes a generalization of the concept to all
$ abc $-triples, with several variants. Extensive numerical computations are provided.
\end{abstract}

\section{Introduction}

We start with common definitions.

\begin{definition} \label{def:rad}
The \emph{radical} $ \rad(n) $ of a positive integer $ n $ is defined as the product of its prime divisors.
\end{definition}

\begin{definition} \label{def:abc}
Let $ a, b, c $ be positive coprime integers such that $ a + b = c $.
\begin{enumerate}[topsep=0pt]
\item The \emph{quality} of the triple $ (a,b,c) $ is defined as
      \[ \qu(a,b,c) = \dfrac{\log c}{\log \rad(abc)} . \]
\item A triple $ (a,b,c) $ is called an \emph{$ abc $-triple} when $ \qu(a,b,c) > 1 $.
\end{enumerate}
\end{definition}

The $ abc $-conjecture says that for every $ \epsilon > 0 $ there are only finitely many $ abc $-triples
with $ \qu(a,b,c) > 1 + \epsilon $.

The concept of \emph{approximation gain} for $ abc $-triples was introduced recently by M\"uller and
Taktikos \cite{MT}. Their starting point is the surd $ k^{1/s} $ where $ k, s $ are positive integers, with
convergents $ \dfrac{p}{q} $, so that $ d = p^s - k q^s $ is relatively close to $ 0 $. This gives a
triple of quality at least close to $ 1 $. To be a bit more precise, when $ e $ is the partial
quotient occurring just after the convergent $ \dfrac{p}{q} $, then by continued fraction theory
$ \left| k^{1/s} - \dfrac{p}{q} \right| \approx \dfrac{1}{e q^2} $, which implies
\[ |d| = \left| p^s - k q^s \right| = \left| p - k^{1/s} q \right| \dsum_{i=0}^{s-1} p^i \left(k^{1/s}
         q\right)^{s-1-i} \approx \dfrac{s k^{1-1/s}}{e} q^{s-2} . \]
For the proper permutation $ a, b, c $ of $ d, p^s, k q^s $ this gives for the $ abc $-quality in the worst
case (where $ d k p q $ is squarefree)
\[ \qu(a,b,c) = \dfrac{\log\max\{|d|, p^s, k q^s\}}{\log(|d| p k q)} \approx
   \dfrac{\log(k q^s)}{\log\left(\dfrac{s k^2}{e} q^s\right)} \approx
   \dfrac{\log(k q^s)}{\log(k q^s) + \log(k s) - \log e} . \]
The intuition is that the partial quotients have a tendency to be small often, and almost certainly
$ e \ll q $ for growing $ q $. This means that indeed the quality will tend towards $ 1 $. When $ e $ is big
enough it comes above $ 1 $. For some $ k $ and $ s $ this actually does happen, but the $ abc $-conjecture
implies that big partial quotients $ e $ are rare and can't be extremely big. To quantify this a bit more
elegantly, M\"uller and Taktikos split up the $ abc $-quality into a product of two terms: the
\emph{approximation gain} $ \dfrac{\log\max\{p^s,k q^s\}}{\log(|d| p k q)} $ measuring the effect on the
$ abc $-quality of how well $ \dfrac{p}{q} $ approximates $ k^{1/s} $, and the cofactor called \emph{power
gain} $ \dfrac{\log(|d| p k q)}{\log\rad(|d|pkq)} $ measuring the other effects. Where the $ abc $-conjecture
has been inspired by the Mason-Stothers Theorem on the polynomial case of $ a + b = c $, M\"uller \cite{M}
shows that there are similar theorems for the decomposition of the quality into the two gains in the
polynomial case.

For the remainder of this note we assume without loss of generality that $ a < b $, then
$ a = \min\{a, b, c\} $, $ c = \max\{a, b, c\} $. Clearly in the situation of M\"uller and Taktikos one
always has $ a \ll b $. There are however many $ abc $-triples where this gap does not occur, i.e., $ a $
and $ b $ are of approximately the same size, and there are also many $ abc $-triples that have a gap but
do not immediately relate to surds. This note proposes several variants of the approximation gain concept,
starting with a \emph{real approximation gain} in an attempt to cover the cases with a gap but not
necessarily directly related to a surd, then extending this to \emph{$ p $-adic approximation gains} where
the gap is there in some $ p $-adic sense, and resulting in the combined concepts \emph{multiple $ p $-adic
approximation gain} and \emph{combined approximation gain} which are applicable to any $ abc $-triple.
For the known $ abc $-triples we present data on the accompanying web page \cite{dW26} and in tables and
statistics below.

\section{The real approximation gain}

\subsection{Introduction}

An old idea\footnote{Basically this is how Eric Reyssat found his record $ abc $-triple back in 1987.
Although the idea of using continued fractions for surds is not explicitly present in \cite{dW87}
and \cite[Chapter 6]{dW88}, the method is at least related, in using a reduction algorithm for
linear forms that sort of generalizes continued fractions, and in working well for surds too.} to find
$ abc $-triples of high quality, which works when $ a \ll b $, is to look for $ d, b', c' \in \Z $, with
$ b = \beta^d b' $, $ c = \gamma^d c' $, that allow an exceptionally good rational approximation
$ \dfrac{\gamma}{\beta} $ to $ \left(\dfrac{b'}{c'}\right)^{1/d} $, which can be easily found using continued
fractions. Then $ a = c - b $ is pretty small, and this gives hope for a good quality.

\subsection{Definitions}

We start with definitions capturing the approximation gain for surds of various degrees $ d $. As here we
only use the real metric while further on we switch to $ p $-adic metrics, we here use the term \emph{real
approximation gain}.

\begin{definition} \label{def:gain_d}
Let $ (a,b,c) $ be an $ abc $-triple, and let $ d \geq 2 $ be any integer.
\begin{enumerate}[topsep=0pt]
\item Let $ \beta $, $ b' $, $ \gamma $, $ c' $ be positive integers with $ \beta $, $ \gamma $ as large as
      possible such that $ b = \beta^d b' $, $ c = \gamma^d c' $. The \emph{real enhanced radicals}
      $ \rrad_d(a,b,c) $ are defined by
      \[ \rrad_d(a,b,c) = a \beta b' \gamma c'. \]
\item The \emph{$ d $th degree real approximation gain} $ \rag_d(a,b,c) $ and the \emph{$ d $th degree real
      power gain} $ \rpg_d(a,b,c) $ are defined by
      \[ \rag_d(a,b,c) = \dfrac{\log c}{\log\rrad_d(a,b,c)}, \quad
         \rpg_d(a,b,c) = \dfrac{\log \rrad_d(a,b,c)}{\log \rad(abc)}. \]
\end{enumerate}
\end{definition}

Note that a real enhanced radical is not necessarily squarefree, e.g.\ $ \rrad_2(1,8,9) = 12 $. Clearly
$ \qu(a,b,c) = \rag_d(a,b,c) \rpg_d(a,b,c) $, and $ \rrad_d(a,b,c) \geq \rad(abc) $ so
$ \rag_d(a,b,c) \leq \qu(a,b,c) $ and $ \rpg_d(a,b,c) \geq 1 $.

For maximizing the approximation gain it makes sense to define the \emph{real approximation gain}
$ \rag(a,b,c) $ and the \emph{real power gain} $ \rpg(a,b,c) $ as the largest resp.\ smallest possible
$ d $'th degree ones.

\begin{definition} \label{def:gain}
For an $ abc $-triple $ (a,b,c) $, the \emph{real approximation gain} $ \rag(a,b,c) $ and the \emph{real
power gain} $ \rpg(a,b,c) $ are defined by
\[ \rag(a,b,c) = \max_{d \geq 2} \rag_d(a,b,c), \quad \rpg(a,b,c) = \min_{d \geq 2} \rpg_d(a,b,c). \]
\end{definition}

It's not hard to see that they exist: when $ d $ gets bigger than the highest exponent occurring in $ b $ or
$ c $ then $ \rrad_d(a,b,c) = abc $, so only finitely many real enhanced radicals are possible. And of
course $ \qu(a,b,c) = \rag(a,b,c) \rpg(a,b,c) $, $ \rag(a,b,c) \leq \qu(a,b,c) $ and $ \rpg(a,b,c) \geq 1 $.
Also note that $ \rrad_d(a,b,c) \leq a b c < c^3 $, so $ \rag(a,b,c) > \dfrac{1}{3} $, and
$ \rpg(a,b,c) < 3 \qu(a,b,c) $.

\subsection{Examples}
As an example we take Reyssat's $ abc $-triple $ 2 + 3^{10} \cdot 109 = 23^5 $, of record quality
$ 1.62991\ldots $, with $ d = 5 $. This gives $ b' = 109 $, $ c' = 1 $, $ \beta = 3^2 $, $ \gamma = 23 $. We
compute the continued fraction $ 109^{1/5} = [2, 1, 1, 4, 77\,733, 2, \ldots] $, with convergents
$ \dfrac{2}{1} $, $ \dfrac{3}{1} $, $ \dfrac{5}{2} $, $ \dfrac{23}{9} $, $ \dfrac{1787864}{699599}, \ldots $,
and the convergent $ \dfrac{23}{9} $ occurring just before the exceptionally large partial quotient $ 77733 $
gives us exactly $ \beta $ and $ \gamma $.

Data for the real approximation and power gains for Reyssat's $ abc $-triple can be found in Table
\ref{tab:Reyssat_real}.

\begin{table}[ht] \centering
{\small $ \begin{array}{c|cc|cc|c|cc}
          d & \beta &               b' & \gamma &   c' &                     \rrad_d(a,b,c) & \rag_d(a,b,c)
          &  \rpg_d(a,b,c) \\ \hline
          2 &   3^5 &              109 &   23^2 & 23   & 2 \cdot 3^5    \cdot23^3 \cdot 109 & 0.77289\ldots
          & 2.10883 \ldots \\
          3 &   3^3 & 3      \cdot 109 &   23   & 23^2 & 2 \cdot 3^4    \cdot23^3 \cdot 109 & 0.81715\ldots
          & 1.99461 \ldots \\
          4 &   3^2 & 3^2    \cdot 109 &   23   & 23   & 2 \cdot 3^4    \cdot23^2 \cdot 109 & 0.97679\ldots
          & 1.66863 \ldots \\
          5 &   3^2 &              109 &   23   &  1   & 2 \cdot 3^2    \cdot23   \cdot 109 & 1.46283\ldots
          & 1.11421 \ldots \\
          6 &   3   & 3^4    \cdot 109 &   1    & 23^5 & 2 \cdot 3^5    \cdot23^5 \cdot 109 & 0.59037\ldots
          & 2.76079 \ldots \\
          7 &   3   & 3^3    \cdot 109 &   1    & 23^5 & 2 \cdot 3^4    \cdot23^5 \cdot 109 & 0.61585\ldots
          & 2.64658 \ldots \\
          8 &   3   & 3^2    \cdot 109 &   1    & 23^5 & 2 \cdot 3^3    \cdot23^5 \cdot 109 & 0.64363\ldots
          & 2.53236 \ldots \\
          9 &   3   & 3      \cdot 109 &   1    & 23^5 & 2 \cdot 3^2    \cdot23^5 \cdot 109 & 0.67403\ldots
          & 2.41814 \ldots \\
         10 &   3   &              109 &   1    & 23^5 & 2 \cdot 3      \cdot23^5 \cdot 109 & 0.70744\ldots
          & 2.30392 \ldots \\
    \geq 11 &   1   & 3^{10} \cdot 109 &   1    & 23^5 & 2 \cdot 3^{10} \cdot23^5 \cdot 109 & 0.48918\ldots
          & 3.33188 \ldots \\
    \end{array} $}
    \caption{$ d $'th degree real approximation and power gains for Reyssat's $ abc $-triple.}
    \label{tab:Reyssat_real}
\end{table}

Clearly at degree $ d = 5 $ the influence of the $ d $th powers is most prominent, and this is where the
convergent $ \dfrac{\gamma}{\beta} = \dfrac{23}{9} $ to $ 109^{1/5} $ shows up (note that this also happens
at $ d = 4 $). In this case $ \rag(a,b,c) = \rag_5(a,b,c) = 1.46283\ldots $.

Just for fun we also give a very bad sequence of examples. Let $ p $ be a prime such that also $ p - 6 $ and
$ p + 6 $ are prime. Then $ \rag(p-6,p+6,2p) = \qu(p-6,p+6,2p) = \dfrac{\log(2p)}{\log(2p(p^2-36))} $ and
$ \rpg(p-6,p+6,2p) = 1 $. Conjecturally there are infinitely many such triples, and if $ p \to \infty $ then
$ \rag(p-6,p+6,2p) \to \dfrac{1}{3} $, so the lower bound $ \dfrac{1}{3} $ for the approximation gain can not
be improved.

\subsection{Computational results}

The real approximation gain was computed for the $ 14482065 $ $ abc $-triples below $ 10^{18} $, and for the
known $ 9345651 $ $ abc $-triples between $ 10^{18} $ and $ 2^{63} $, from Bart de Smit's tables \cite{dS},
containing a wealth of data. The top 10 real approximation gain $ abc $-triples are given in Table
\ref{tab:top10_real}. The largest real approximation gain, $ 1.46283\ldots $, occurs, not unexpectedly, for
Reyssat's $ abc $-triple.

\begin{table}[ht] \centering
{\small $ \begin{array}{ccc|cc|c}
    a & b & c & \rag & \rpg & \qu \\ \hline
    2 & 3^{10} \cdot 109 & 23^5 & 1.46283\ldots & 1.11421\ldots & 1.62991\ldots \\
    1 & 2^4 \cdot 5 & 3^4 & 1.29203\ldots & 1 & 1.29203\ldots \\
    3 \cdot 11 & 2^7 \cdot 5^6 & 7^6 \cdot 17 & 1.28721\ldots & 1.06552\ldots & 1.37156\ldots \\
    5 & 3^{11} & 2^{10} \cdot 173 & 1.25190\ldots & 1.12842\ldots & 1.41268\ldots \\
    1 & 2^6 \cdot 3 \cdot 5 \cdot 7 \cdot 13^4 \cdot 17 & 239^4 & 1.24408\ldots & 1.08545\ldots & 1.35040\ldots \\
    1 & 2 \cdot 3^4 \cdot 5 \cdot 11^4 & 7 \cdot 13 \cdot 19^4 & 1.22892\ldots & 1 & 1.22892\ldots \\
    5 & 2 \cdot 3 \cdot 257^3 & 467^3 & 1.22139\ldots & 1 & 1.22139\ldots \\
    2^2 & 3^{10} \cdot 5 \cdot 19 \cdot 109 \cdot 67751 & 23^{10} & 1.20645\ldots & 1.02740\ldots & 1.23951\ldots \\
    2 \cdot 61   & 3^5 \cdot 7^5 & 11 \cdot 13^5 & 1.18821\ldots & 1 & 1.18821\ldots \\
    3 \cdot 2713 & 2^{44} \cdot 23 \cdot 107 & 5 \cdot 7^{14} \cdot 17 \cdot 751 & 1.15075\ldots & 1.09085\ldots & 1.25530\ldots \\
    \end{array} $}
\caption{Top 10 real approximation gains known.}
\label{tab:top10_real}
\end{table}

The largest real power gain observed is $ \rpg(a,b,c) = 2.94376\ldots $ at
$ (a,b,c) = (2^{49}, 7^7 \cdot 19^3 \cdot 123499, 3^{13} \cdot 5^5 \cdot 503^2) $, with
$ \rag(a,b,c) = 0.45020\ldots $, $ \qu(a,b,c) = 1.32528\ldots$. Note that in Table \ref{tab:top10_real}
$ \rag $ is always close to $ \qu $. Much more data are available on the website \cite{dW26} for this paper.

\section{The $ p $-adic approximation gain}

\subsection{Introduction}

A potential disadvantage of the real approximation gain is that it gives interesting results only when
$ a \ll b $, and that is not typical for $ abc $-triples. As an illustration we look at the example of
$ a = 2^{19} \cdot 13 \cdot 103 $, $ b = 7^{11} $, $ c = 3^{11} \cdot 5^3 \cdot 11^2 $, with a pretty good
quality $ 1.45261\ldots $, while $ a $ is relatively big, indeed $ a > \dfrac{1}{4} c $. For the real
approximation gain we get
\[ \rag(a,b,c) = \rag_{11}(a,b,c) = \dfrac{\log\left(3^{11} \cdot 5^3 \cdot 11^2\right)}{\log\left(2^{19}
    \cdot 13 \cdot 103 \cdot 7 \cdot 3 \cdot 5^3 \cdot 11^2\right)} = 0.65708\ldots , \]
which is quite low compared to the quality.

This leads to the following variation on the theme of approximation gains. Instead of looking at $ a $ that
is small in the real sense, we could look at any of $ a, b $ or $ c $ that is small in some other sense. Then
the $ p $-adic metric comes to mind, because `small' then means `divisible by a high power of $ p $'. Indeed,
for good $ abc $-triples one expects high prime powers in all three of $ a, b, c $, meaning that each of them
can be considered small in a $ p $-adic sense for some well chosen primes $ p $, or even a combination of
primes.

\subsection{Examples}

We start with a number of examples to get a feeling for what we should use as definitions. For a $ p $-adic
integer $ x = \dsum_{i=0}^{\infty} x_i p^i $ with $ x_i \in \{ 0, 1, \ldots, p-1 \} $ we write
$ x = 0.x_0x_1x_2\ldots $, where we omit the trailing zeros if $ x \in \Z $.

\subsubsection{Example 1 -- big gap}

First let's take the example of the previous subsection, $ a = 2^{19} \cdot 13 \cdot 103 $, $ b = 7^{11} $,
$ c = 3^{11} \cdot 5^3 \cdot 11^2 $, with the pretty good quality $ 1.45261\ldots $ but a quite disappointing
real approximation gain of $ 0.65708\ldots $, due to $ a $ being big in the real sense. However, $ a $ is
small in the $ p $-adic sense for $ p = 2 $, and $ b $ and $ c $ both have a big exponent $ 11 $ at other
primes. So the numbers $ \dfrac{3^{11}}{7^{11}} $ and $ (5^3 11^2)^{-1} $ are pretty close in the $ 2 $-adic
sense, indeed,
\begin{eqnarray*}
    \dfrac{3^{11}}{7^{11}} & = & 0.10111\,10000\,11110\,01011\,11000\,\ldots , \\
           (5^3 11^2)^{-1} & = & 0.10111\,10000\,11110\,01010\,11111\,\ldots ,
\end{eqnarray*}
agreeing in exactly the first $ 19 $ binary digits, corresponding to the exponent $ 19 $ of the prime $ 2 $
in $ a $. Turning to $ 11 $th roots, we compute, using Hensel's Lemma,
\begin{eqnarray*}
          \dfrac{3}{7} & = & 0.10100\,10010\,01001\,00100\,10010\,\ldots , \\
    (5^3 11^2)^{-1/11} & = & 0.10100\,10010\,01001\,00101\,11111\,\ldots ,
\end{eqnarray*}
again agreeing in exactly the first 19 binary digits. Instead of working with the surd itself, we can work
with an integer that is at $ 2 $-adic distance at most $ 2^{19} $, indeed, with
$ (5^3 11^2)^{-1/11} \approx 0.10100\,10010\,01001\,0010 = 149797 $. Note that, when looking for
$ abc $-triples starting from a surd, we don't know a priori at which exponent of $ 2 $ we're going to find
something, so we need to go through a whole set of exponents, which corresponds to not knowing at which point
we'll find a big partial quotient in a continued fraction expansion of a real surd. But when we start out
with an $ abc $-triple, we exactly know the right exponent, in this case $ 19 $.

The set of all rational approximations $ \dfrac{x}{y} $ to a given $ p $-adic integer $ \alpha $ of a given
approximation order $ n $, on identifying the fraction $ \dfrac{x}{y} $ with a vector
$ \begin{pmatrix} x \\ y \end{pmatrix} $, forms a twodimensional lattice
\[ \Lambda_n(\alpha) =
    \left\{ \left. \begin{pmatrix} x \\ y \end{pmatrix} \in \Z^2 \right| | x - y \alpha |_p \leq p^{-n}
    \right \} , \]
see \cite{dW86} (note that we see lattice points as column vectors). A basis of $ \Lambda_n(\alpha) $ is
given by the columns of the matrix $ \begin{pmatrix} p^n & \alpha_p \\ 0 & 1 \end{pmatrix} $, where
$ \alpha_p $ is the unique integer in $ \{ 0, 1, \ldots, p^n - 1 \} $ with
$ \alpha_p \equiv \alpha \pmod{p^n} $. Finding the best approximations therefore can be done efficiently by
lattice basis reduction. Note that twodimensional lattice basis reduction is closely related to the continued
fraction algorithm and Euclid's algorithm.

In our example we have the matrix $ \begin{pmatrix} 2^{19} & 149797 \\ 0 & 1 \end{pmatrix} $. The lattice
basis reduction algorithm reduces the longer column by the smaller one, meaning that one subtracts the
smaller column $ k $ times from the longer one where $ k $ is chosen optimally so that the resulting vector
gets the shortest length, and then repeating this until no more improvement is possible. In this case:
\[ \begin{pmatrix} 2^{19} & 149797 \\ 0 & 1 \end{pmatrix} \stackrel{k=3}{\to}
   \begin{pmatrix} 74897 & 149797 \\ -3 & 1 \end{pmatrix} \stackrel{k=2}{\to}
   \begin{pmatrix} 74897 & 3 \\ -3 & 7 \end{pmatrix} \stackrel{k=3874}{\to}
   \begin{pmatrix} 63275 & 3 \\ -27121 & 7 \end{pmatrix}. \]
Clearly the factors $ k $ play the role of the partial quotients, and we indeed recover $ \dfrac{3}{7} $ as a
very good rational approximation to the surd $ (5^3 11^2)^{-1/11} $ just before a huge $ k $.

One could also use a real continued fraction here, namely of $ \dfrac{149797}{2^{19}} $, being
$ [0, 3, 2, 24965, 1, 2] $, and the convergent found just before the big partial quotient $ 24965 $ is
$ \dfrac{2}{7} $. Its interpretation is that $ \dfrac{2}{7} $ is an excellent \emph{real} approximation to
$ \dfrac{149797}{2^{19}} $, so that $ 7 \cdot 149797 - 2 \cdot 2^{19} $ is an extremely small integer, in
fact it is $ 3 $. Taken modulo $ 2^{19} $ we find
$ 7 \cdot (5^3 11^2)^{-1/11} \equiv 7 \cdot 149797 \equiv 3 \pmod{2^{19}} $, revealing the very good
$ 2 $-adic approximation $ \dfrac{3}{7} $ to $ (5^3 11^2)^{-1/11} $. Going back to lattices, this precisely
corresponds to $ \begin{pmatrix} -2 \\ 7 \end{pmatrix} $ being the coordinate vector of the lattice point
$ \begin{pmatrix} 3 \\ 7 \end{pmatrix} $ w.r.t.\ the original basis.

How to define a $ p $-adic approximation gain to take advantage of the fact that now we see $ a $ as small
because it is so $ 2 $-adically? It seems to make sense to take the already defined real approximation gain
where in the denominator the $ a $ was present in full glory, and simply divide $ a $ by the relevant power
of $ 2 $, leaving one factor $ 2 $. So in this case it becomes
\[ \dfrac{\log\left(3^{11} \cdot 5^3 \cdot 11^2\right)}{\log\left(2 \cdot 13 \cdot 103 \cdot 7 \cdot 3 \cdot
    5^3 \cdot 11^2 \right)} = 1.05580\ldots , \]
indeed a much better approximation gain than the real one.

\subsubsection{Example 2 -- multiple primes} \label{sec:3.2.2}

Let's also give an example of a case where it makes sense to use more primes at once as $ p $-adic base
primes. Let's pick $ a = 3^2 \cdot 7^3 $, $ b = 17^3 $, and $ c = 2^6 \cdot 5^3 $, with quality
$ 1.09863\ldots $. The original method of Karsten and Taktikos with $ d = 3 $ would give $ b' = c' = 1 $,
$ \beta = 17 $, $ \gamma = 20 $, and one gets $ \dfrac{20}{17} $ as a `good' approximation to
$ 1^{1/3} $, which is a bit silly. It would give a real approximation gain of only
\[ \dfrac{\log\left(2^6 \cdot 5^3\right)}{\log\left(3^2 \cdot 7^3 \cdot 17 \cdot 2^2 \cdot 5\right)} =
    0.64824\ldots . \]
A second shot would be to look at the largest exponent, which occurs at the prime $ 2 $, and therefore work
$ 2 $-adically. We would get $ -\dfrac{17}{7} $ as a rational approximation $ 2 $-adically close to $ 3^{2/3}
$, and the $ 2 $-adic approximation gain then is
\[ \dfrac{\log\left(2^6 \cdot 5^3\right)}{\log\left(3^2 \cdot 7 \cdot 17 \cdot 2 \cdot 5^3\right)} =
    0.71910\ldots .\]
The $ p $-adic approach with, e.g., $ p = 7 $, does even better with an approximation gain of
$ 0.90123\ldots $.

But we can even do better, when we look $ p $-adically for the primes $ 2 $ and $ 5 $ simultaneously. As we
have third powers in $ a $ and $ b $ we look for good rational approximations of $ -3^{2/3} $. We take
approximation degrees $ 2^6 $ and $ 5^3 $. It's not hard to see that
$ -3^{2/3} \equiv 39 \pmod{2^6} $, $ -3^{2/3} \equiv 56 \pmod{5^3} $, and the Chinese Remainder Theorem then
gives $ -3^{2/3} \equiv 3431 \pmod{2^6 \cdot 5^3} $. So we compute the real continued fraction of
$ \dfrac{3431}{2^6 \cdot 5^3} $, being $ [0, 2, 3, 66, 1, 16] $, the approximation corresponding to the
biggish partial quotient of $ 66 $ is $ \dfrac{3}{7} $, and we find the expected answer by
$ 7 \cdot 3431 - 3 \cdot 8000 = 17 $ to be $ \dfrac{17}{7} $. For computing the improved approximation gain we
replace the powers $ 2^6, 5^3 $ by the primes $ 2, 5 $ only, and get
\[ \dfrac{\log\left(2^6 \cdot 5^3\right)}{\log\left(3^2 \cdot 7 \cdot 17 \cdot 2 \cdot 5\right)} =
    0.96855\ldots \]
which looks not too bad. See section \ref{sec:3.4} for a continuation of this example.

Clearly with the $ p $-adic case we can make any of the three of $ a, b, c $ small in some $ p $-adic sense,
and we can get three different $ p $-adic approximation gains. It makes sense to take the biggest of them as
\emph{the} $ p $-adic approximation gain. The exact definitions are written out in section \ref{sec:3.3}.

\subsubsection{Example 3 -- Reyssat}

Let's finally see what happens with Reyssat's $ abc $-triple $ 2 + 3^{10} \cdot 109 = 23^5 $, with quality
$ 1.62991\ldots $, where we found an approximation gain of $ 1.46283\ldots $. Computing with all four
occurring primes $ p = 2, 3, 23, 109 $ we computed the best $ p $-adic approximation gains, see Table
\ref{tab:Reyssat_padic}.

\begin{table}[ht] \centering
{\small $ \begin{array}{c|cc|c}
          p & \text{surd}                         & \text{approximation} & \text{approximation gain} \\ \hline
          2 & 109^{1/5}                           & \dfrac{23}{9}        & 1.46283\ldots             \\[2mm]
          3 & 2^{-1/5}                            & \dfrac{1}{23}        & 1.62991\ldots             \\
         23 & -\left(\dfrac{2}{109}\right)^{1/10} & \dfrac{3}{1}         & 1.62991\ldots             \\
        109 & 2^{-1/5}                            & \dfrac{1}{23}        & 0.80372\ldots             \\
    {3,109} & 2^{-1/5}                            & \dfrac{1}{23}        & 1.62991\ldots             \\
  \end{array} $}
\caption{$ p $-adic approximation and power gains for Reyssat's triple.}
\label{tab:Reyssat_padic}
\end{table}

For $ p = 2 $ the whole thing is almost trivial as there's an exponent of only $ 1 $, so we're computing
modulo $ 2 $ only, but this still gives the same $ 2 $-adic approximation gain
as in the real case. For $ p = 3 $ and $ p = 23 $ we reach even the best possible $ p $-adic
approximation gains, equal to the quality. For $ p = 109 $ the approximation gain is not very good, which was
to be expected as $ 109 $ has only exponent $ 1 $.

In general it makes sense to choose one of $ a, b, c $, and take it as a whole as a product of prime
powers that is multiple $ p $-adically small. In this case in the enhanced radical the contribution of
this number is set to be minimal, namely to only the product of its prime factors with multiplicity
$ 1 $, and the $ p $-adic approximation gain so is maximized.

\subsection{Definitions} \label{sec:3.3}

We now develop various definitions for $ p $-adic approximation gains.

\begin{definition} \label{def:p_gain_d}
Let $ (a,b,c) $ be an $ abc $-triple, and let $ d \geq 2 $ be any integer. Let
$ \left(a^{\ast}, b^{\ast}, c^{\ast}\right) $ be any permutation of $ (a,b,c) $.
\begin{enumerate}[topsep=0pt]
\item Let $ \beta $, $ b' $, $ \gamma $, $ c' $ be positive integers with (for given $ d $) $ \beta $,
      $ \gamma $ as large as possible such that $ b^{\ast} = \beta^d b' $, $ c^{\ast} = \gamma^d c' $. The
      \emph{multiple $ p $-adic enhanced radicals} $ \mrad_{a^{\ast},d}(a,b,c) $ are defined by
      \[ \mrad_{a^{\ast},d}(a,b,c) = \rad(a^{\ast}) \beta b' \gamma c'. \]
\item The \emph{$ d $th degree multiple $ p $-adic approximation gains} $ \mpag_{a^{\ast},d}(a,b,c) $ and the
      \emph{$ d $th degree multiple $ p $-adic power gains} $ \mpg_{a^{\ast},d}(a,b,c) $ are defined by
      \[ \mpag_{a^{\ast},d}(a,b,c) = \dfrac{\log c}{\log\mrad_{a^{\ast},d}(a,b,c)}, \quad
      \mpg_{a^{\ast},d}(a,b,c) = \dfrac{\log\mrad_{a^{\ast},d}(a,b,c)}{\log\rad(abc)}. \]
\end{enumerate}
\end{definition}

Note that the symbol $ p $ in the naming of those approximation gains is not explicitly present at all in the
definition's formul\ae. The multiple $ p $-adic approach uses all primes $ p $ in $ a^{\ast} $ to get the
most out of the $ p $-adic metrics. By taking $ \rad(a^{\ast}) $ instead of $ a^{\ast} $ itself in the
enhanced radical, we take care of the fact that $ a^{\ast} $ is multiple $ p $-adically small.

\begin{definition} \label{def:p_gain}
For an $ abc $-triple $ (a, b, c) $, the \emph{multiple $ p $-adic approximation gain} $ \mpag(a,b,c) $ and
the \emph{multiple $ p $-adic power gain} $ \mpg(a,b,c) $ are defined by
\[ \mpag(a,b,c) = \max_{d\geq2,a^{\ast}\in\{a,b,c\}} \mpag_{a^{\ast},d}(a,b,c), \quad
   \mpg(a,b,c) = \min_{d\geq2,a^{\ast}\in\{a,b,c\}} \mpg_{a^{\ast},d}(a,b,c) . \]
\end{definition}

To compute the multiple $ p $-adic approximation gain one even does not have to do any computation with
$ p $-adic numbers. It is totally straightforward to compute it directly from the multiple $ p $-adic
enhanced radicals as defined in Definition \ref{def:p_gain_d}. Only to understand the nature of the
$ p $-adic approximation and the underlying $ p $-adic approximation lattice / continued fraction, one has to
understand the $ p $-adics of the situation.

One could also be stricter and allow for the $ p $-adic approximation gain only single prime powers.

\begin{definition} \label{def:sp_gain_d}
Let $ (a,b,c) $ be an $ abc $-triple, and let $ d \geq 2 $ be any integer. Let
$ \left(a^{\ast}, b^{\ast}, c^{\ast}\right) $ be any permutation of $ (a,b,c) $. Let $ p $ be a prime with
$ p \mid a^{\ast} $, with exponent $ e_p(a) $.
\begin{enumerate}[topsep=0pt]
\item Let $ \beta $, $ b' $, $ \gamma $, $ c' $ be positive integers with (for given $ d $) $ \beta $,
      $ \gamma $ as large as possible such that $ b^{\ast} = \beta^d b' $, $ c^{\ast} = \gamma^d c' $. The
      \emph{enhanced radicals} $ \srad_{p,d}(a,b,c) $ are defined by
      \[ \srad_{p,d}(a,b,c) = \dfrac{a^{\ast}}{p^{e_p(a)-1}} \beta b' \gamma c'. \]
\item The \emph{$ d $th degree single $ p $-adic approximation gains} $ \sag_{p,d}(a,b,c) $ and the
      \emph{$ d $th degree single $ p $-adic power gains} $ \spg_{p,d}(a,b,c) $ are defined by
      \[ \sag_{p,d}(a,b,c) = \dfrac{\log c}{\log\srad_{p,d}(a,b,c)}, \quad
         \spg_{p,d}(a,b,c) = \dfrac{\log \srad_{p,d}(a,b,c)}{\log \rad(a,b,c)}. \]
\end{enumerate}
\end{definition}

\begin{definition} \label{def:sp_gain}
For an $ abc $-triple $ (a, b, c) $, the \emph{single $ p $-adic approximation gain} $ \sag(a,b,c) $ and
the \emph{single $ p $-adic power gain} $ \spg(a,b,c) $ are defined by
\[ \sag(a,b,c) = \max_{d\geq2, p|abc \textrm{\emph{ prime}}} \sag_{p,d}(a,b,c), \quad
   \spg(a,b,c) = \min_{d\geq2, p|abc \textrm{\emph{ prime}}} \spg_{p,d}(a,b,c) . \]
\end{definition}

\subsection{Example 2 revisited} \label{sec:3.4}
We continue the example from section \ref{sec:3.2.2}. We have $ 8 $ choices, namely real (indicated by
$ p = \infty $), the single $ p $-adic cases $ p = 2, 3, 5, 7, 17 $, and the $ p $-adic combinations
$ \{ 2, 5 \}, \{ 3, 7 \} $. The data are in Table \ref{tab:p-adic}, where always $ d = 3 $, and by $ r $ we
denote the enhanced radical.

\begin{table}[ht] \centering
{\small $ \begin{array}{c|cc|c}
        p & \text{surd} & \text{approximation} & \text{gain} \\ \hline
        \infty      & \left(\dfrac{1}{1}\right)^{1/3} & \dfrac{20}{17} & \rag_3 = 0.64824\ldots \\
                  2 & \left(\dfrac{9}{1}\right)^{1/3} & \dfrac{17}{7}  & \sag_{2,3} = 0.71910\ldots \\
                  3 & \left(\dfrac{1}{1}\right)^{1/3} & \dfrac{20}{17} & \sag_{3,3} = 0.70403\ldots \\
                  5 & \left(\dfrac{9}{1}\right)^{1/3} & \dfrac{17}{7}  & \sag_{5,3} = 0.70517\ldots \\
                  7 & \left(\dfrac{1}{1}\right)^{1/3} & \dfrac{20}{17} & \sag_{7,3} = 0.90123\ldots \\
                 17 & -\left(\dfrac{9}{1}\right)^{1/3} & \dfrac{20}{7} & \sag_{17,3} = \mpag_{b,3} =
                                                                       0.90123\ldots \\
            \{2,5\} & \left(\dfrac{9}{1}\right)^{1/3} & \dfrac{17}{7}  & \mpag_{c,3} = 0.96855\ldots \\
            \{3,7\} & \left(\dfrac{1}{1}\right)^{1/3} & \dfrac{20}{17} & \mpag_{a,3} = 1.01281\ldots \\
        \end{array} $}
\caption{Real and $ p $-adic approximation and power gains for
    $ (a,b,c) = (3^2 \cdot 7^3, 17^3, 2^6 \cdot 5^3) $.}
\label{tab:p-adic}
\end{table}

It turns out that $ \rag = 0.64824\ldots $, $ \mpag = 1.01281\ldots $, and $ \sag = 0.90123\ldots $.

\subsection{Computational results}

The various $ p $-adic approximation gains were computed for the $ 14482065 $ $ abc $-triples below
$ 10^{18} $, and for the known $ 9345651 $ $ abc $-triples between $ 10^{18} $ and $ 2^{63} $, from Bart de
Smit's tables \cite{dS}. The top 10 multiple $ p $-adic approximation gain $ abc $-triples are given in Table
\ref{tab:top10_mult}, the top 10 single ones in Table \ref{tab:top10_single}. Both the largest multiple and
single $ p $-adic approximation gain, $ 1.62991\ldots $, occurs, not unexpectedly, for Reyssat's
$ abc $-triple.

\begin{table}[ht] \centering
    {\small $ \begin{array}{ccc|cc}
            a & b & c & \mpag & \qu \\ \hline
            2 & 3^{10} \cdot 109 & 23^5 & 1.62991\ldots & 1.62991\ldots \\
            1 & 2 \cdot 3^7 & 5^4 \cdot 7 & 1.56788\ldots & 1.56788\ldots \\
            37 & 2^{15} & 3^8 \cdot 5 & 1.48291\ldots & 1.48291\ldots \\
            1 & 3^{16} \cdot 7 & 2^3 \cdot 11 \cdot 23 \cdot 53^3 & 1.47444\ldots & 1.47444\ldots \\
            2^7 \cdot 5^2 & 7^6 \cdot 41 & 13^6 & 1.46192\ldots & 1.46192\ldots \\
            1 & 2^5 \cdot 3 \cdot 5^2 & 7^4 & 1.45567\ldots & 1.45567\ldots \\
            1 & 3 \cdot 5^5 \cdot 47^2 & 2^{18} \cdot 79 & 1.44965\ldots & 1.44965\ldots \\
            89 & 7 \cdot 11^8 & 2^{20} \cdot 3^3 \cdot 53 & 1.44774\ldots & 1.44774\ldots \\
            1 & 19 \cdot 509^3 & 2^{19} \cdot 3^4 \cdot 59 & 1.43835\ldots & 1.43835\ldots \\
            11 & 7^3 \cdot 167^2 & 2 \cdot 3^{14} & 1.42832\ldots & 1.42832\ldots \\
        \end{array} $}
    \caption{Top 10 multiple $ p $-adic approximation gains known.}
    \label{tab:top10_mult}
\end{table}

\begin{table}[ht] \centering
    {\small $ \begin{array}{ccc|cc}
            a & b & c & \sag & \qu \\ \hline
            2 & 3^{10} \cdot 109 & 23^5 & 1.62991\ldots & 1.62991\ldots \\
            1 & 2 \cdot 3^7 & 5^4 \cdot 7 & 1.56788\ldots & 1.56788\ldots \\
            37 & 2^{15} & 3^8 \cdot 5 & 1.48291\ldots & 1.48291\ldots \\
            1 & 3^{16} \cdot 7 & 2^3 \cdot 11 \cdot 23 \cdot 53^3 & 1.47444\ldots & 1.47444\ldots \\
            3 & 5^3 & 2^7 & 1.42656\ldots & 1.42656\ldots \\
            2^6 \cdot 5 \cdot 137 & 3^{14} & 13^6 & 1.41369\ldots & 1.41369\ldots \\
            5 & 3^{11} & 2^{10} \cdot 173 & 1.41268\ldots & 1.41268\ldots \\
            7^3 & 3^{10} & 2^{11} \cdot 29 & 1.40956\ldots & 1.54707\ldots \\
            7 \cdot 13 & 2 \cdot 3^{11} \cdot 5 & 11^6 & 1.39548\ldots & 1.39548\ldots \\
            1 & 11^4 \cdot 47 & 2^{15} \cdot 3 \cdot 7 & 1.34609\ldots & 1.34609\ldots \\
        \end{array} $}
    \caption{Top 10 single $ p $-adic approximation gains known.}
    \label{tab:top10_single}
\end{table}

The largest multiple $ p $-adic power gain observed is $ \mpg(a,b,c) = 1.69801\ldots $ at
$ (a,b,c) = (29^3 \cdot 79^5, 2^{31} \cdot 17^3 \cdot 31^2, 3^4 \cdot 5^9 \cdot 7 \cdot 3037^2) $, with
$ \rag(a,b,c) = 0.50951\ldots $, $ \qu(a,b,c) = 1.34683\ldots $.

The largest single $ p $-adic power gain observed is $ \spg(a,b,c) = 2.15027\ldots $ at
$ (a,b,c) = (7^{10} \cdot 19^6 \cdot 37, 2^8 \cdot 17^5 \cdot 271^2 \cdot 349^2, 3^{16} \cdot 5^9 \cdot
 211^2) $, with $ \rag(a,b,c) = 0.49929\ldots $, $ \qu(a,b,c) = 1.35574\ldots $.

Note that in Tables \ref{tab:top10_mult} and \ref{tab:top10_single} of the 20 cases 19 have approximation
gain equal to the quality.

Much more data are available on the website \cite{dW26} for this paper.

\section{Combining the real and $ p $-adic approximation gains}
The multiple $ p $-adic approximation gain is always at least as large as the real approximation gain, even
in the case of $ a $ being extremely small in the real sense. This is because the contribution of $ a $ to
the enhanced radical is always the best in the $ p $-adic case. So in this case the `combined' real and
multiple $ p $-adic approximation gain will just be the multiple $ p $-adic approximation gain. We think this
observation is important enough to call it a theorem.

\begin{theorem} \label{thm:gain}
    For all $ abc $-triples
    \[ \mpag(a,b,c) \geq \rag(a,b,c) . \]
\end{theorem}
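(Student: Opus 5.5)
The plan is a termwise comparison: for each fixed degree $ d \geq 2 $, compare $ \rag_d(a,b,c) $ with the multiple $ p $-adic gain for the particular permutation $ a^{\ast} = a $, $ b^{\ast} = b $, $ c^{\ast} = c $ (recall $ a < b < c $). Then take maxima over $ d $.

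Fix $ d \geq 2 $. In Definition \ref{def:gain_d} the integers $ \beta, b', \gamma, c' $ are determined by $ b = \beta^d b' $, $ c = \gamma^d c' $ with $ \beta, \gamma $ maximal. With $ b^{\ast} = b $ and $ c^{\ast} = c $, Definition \ref{def:p_gain_d} uses exactly the same conditions, so it produces exactly the same $ \beta, b', \gamma, c' $. Consequently
\[ \rrad_d(a,b,c) = a \, \beta b' \gamma c', \qquad \mrad_{a,d}(a,b,c) = \rad(a) \, \beta b' \gamma c' . \]
Since $ \rad(a) \leq a $, this gives $ 1 \leq \mrad_{a,d}(a,b,c) \leq \rrad_d(a,b,c) $.

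Both gains have numerator $ \log c > 0 $, so a smaller enhanced radical can only raise the gain. We must check that the denominators are positive. The product $ \mrad_{a,d}(a,b,c) $ is divisible by $ \rad(abc) $: every prime of $ a $ lies in $ \rad(a) $, every prime of $ b $ divides $ \beta $ or $ b' $, and every prime of $ c $ divides $ \gamma $ or $ c' $. Because $ c \geq 3 $, $ \rad(abc) \geq 2 $, so $ \log \mrad_{a,d}(a,b,c) > 0 $. Hence
\[ \mpag_{a,d}(a,b,c) = \frac{\log c}{\log \mrad_{a,d}(a,b,c)} \geq \frac{\log c}{\log \rrad_d(a,b,c)} = \rag_d(a,b,c) . \]

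Finally, by Definition \ref{def:p_gain}, $ \mpag(a,b,c) \geq \mpag_{a,d}(a,b,c) $ for every $ d \geq 2 $, since $ (a^{\ast},d) = (a,d) $ is one of the admitted choices. Combining this with the previous inequality gives $ \mpag(a,b,c) \geq \rag_d(a,b,c) $ for all $ d \geq 2 $. Taking the maximum over $ d $, which exists by the finiteness remark after Definition \ref{def:gain}, yields $ \mpag(a,b,c) \geq \rag(a,b,c) $.

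The only point needing care is that $ \beta, b', \gamma, c' $ coincide in the two definitions, so that $ \rrad_d $ and $ \mrad_{a,d} $ differ solely in the factor $ a $ versus $ \rad(a) $. This holds because both definitions apply the same maximality condition to the same numbers $ b $ and $ c $.
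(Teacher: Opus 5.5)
Your proposal is correct and matches the paper's proof. The paper also restricts to $a^{\ast} = a$, notes that $\mrad_{a,d}(a,b,c)$ and $\rrad_d(a,b,c)$ differ only in the factor $\rad(a)$ versus $a$, compares the two gains for each $d$, and then maximizes over $d$. Your check that the logarithms in the denominators are positive is a detail the paper leaves implicit.
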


\textbf{Proof.}
Definition \ref{def:p_gain} shows that $ \mpag(a,b,c) \geq \mpag_{a^{\ast},d}(a,b,c) $ for all $ d $ and for
those permutations $ (a^{\ast},b^{\ast},c^{\ast}) $ of $ (a,b,c) $ satisfying $ a^{\ast} = a $. For the
enhanced radicals we have, by Definitions \ref{def:gain_d} and \ref{def:p_gain_d},
$\mrad_{a^{\ast},d}(a,b,c) = \rad(a^{\ast}) \beta b' \gamma c' =
    \rad(a) \beta b' \gamma c' \leq a \beta b' \gamma c' = \rrad_d(a,b,c) $, so
$ \mpag_{a^{\ast},d}(a,b,c) = \dfrac{\log c}{\mrad_{a^{\ast},d}(a,b,c)} \geq \dfrac{\log c}{\rrad_d(a,b,c)} $
for all $ d $. The result follows by Definition \ref{def:gain}. \hfill $ \square $ \vspace*{4mm}

For the single prime case usually $ \sag(a,b,c) \geq \rag(a,b,c) $, but it might happen that
$ \sag(a,b,c) < \rag(a,b,c) $, an example is
$ (a,b,c) = (1, 2^{19} \cdot 5^3 \cdot 4909^3, 11^5 \cdot 31^4 \cdot 43 \cdot 601 \cdot 2017) $ with
$ \sag(a,b,c) = 0.90325\ldots $, $ \rag(a,b,c) = 0.92954\ldots $, $ \mpag(a,b,c) = 1.18190\ldots $,
$ \qu(a,b,c) = 1.26428\ldots $. So in such cases it does make sense to introduce a combined real and single
$ p $-adic approximation gain as follows.

\begin{definition} \label{def:tot_gain}
For an $ abc $-triple $ (a, b, c) $, the \emph{combined approximation gain} $ \cag(a,b,c) $ and the
\emph{combined power gain} $ \cpg(a,b,c) $ are defined by
\[ \cag(a,b,c) = \max\{\rag(a,b,c),\sag(a,b,c)\}, \quad \cpg(a,b,c) = \min\{\rpg(a,b,c),\spg(a,b,c)\} . \]
\end{definition}

\section{Computational results}

We used the data on $ abc $-triples on Bart de Smit's webpage \cite{dS}, containing all 14\,482\,065
$ abc $-triples with $ c < 10^{18} $, all 9\,345\,651 $ abc $-triples with $ 10^{18} \leq c < 2^{63} $, and
371 special $ abc $-triples with $ c \geq 2^{63} $, namely those of high quality, high merit, or unbeaten. Of
all those triples we computed the real and $ p $-adic approximation and power gains. Figure \ref{fig:distr}
shows the distributions of the approximation and power gains in bins of size $ 0.01 $.

\begin{figure}[h]
    \centering
    \includegraphics[width=0.68\textwidth]{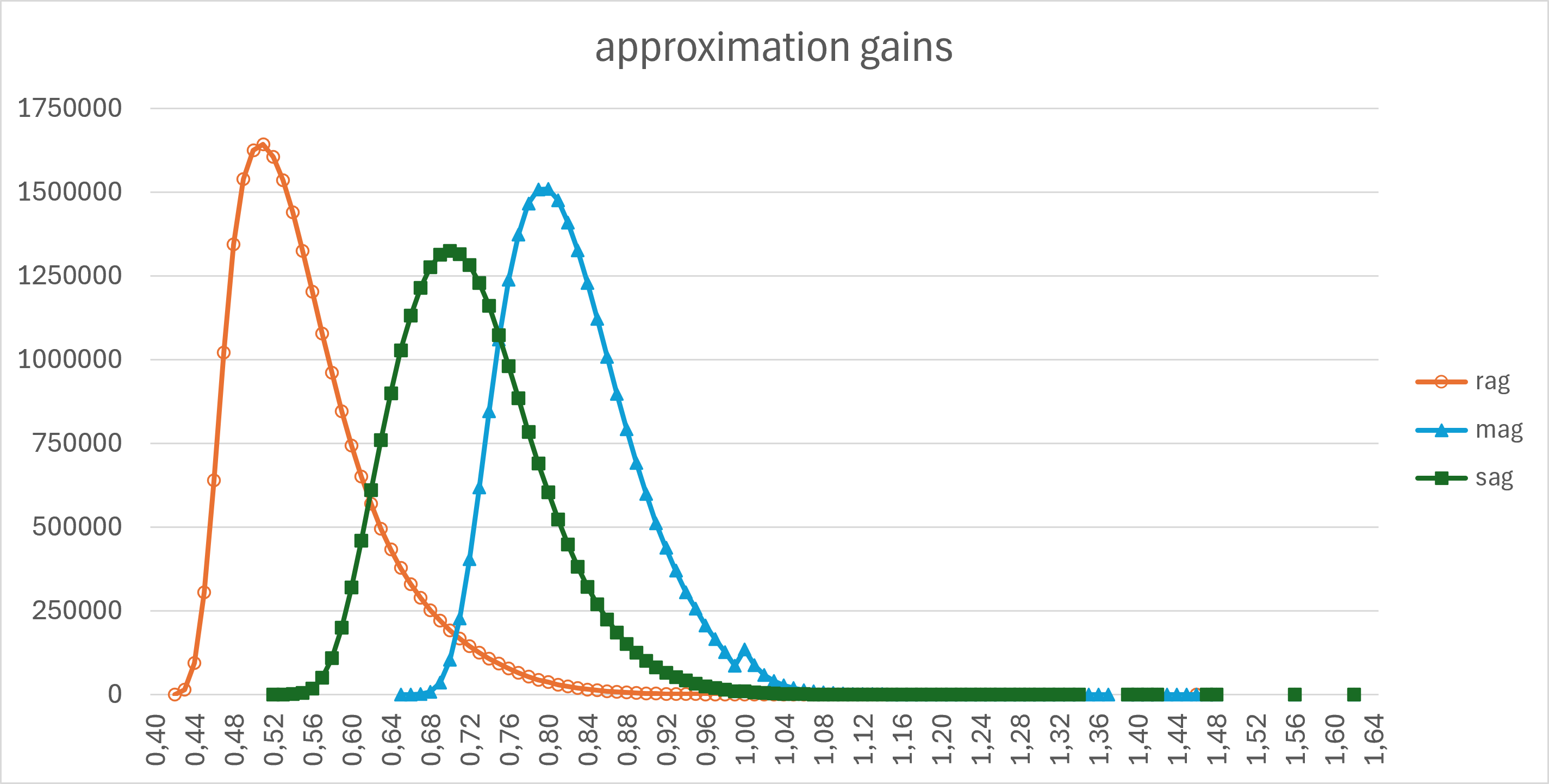} \\[2mm]
    \includegraphics[width=0.68\textwidth]{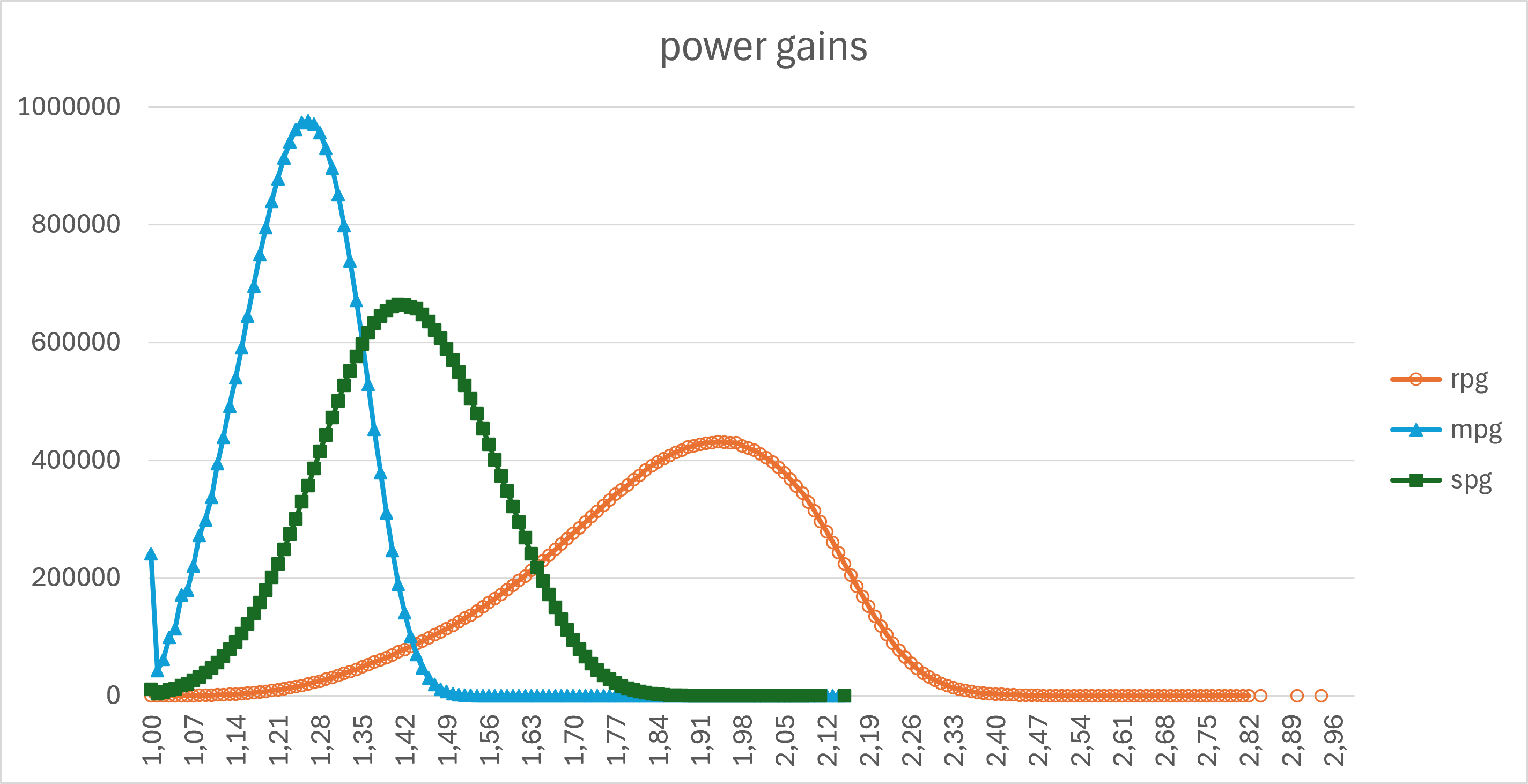}
    \caption{Distributions of the approximation and power gains.}
    \label{fig:distr}
\end{figure}

On the webpage \cite{dW26} for this paper we present tables with $ abc $-triples of high gain, according to
the selection criteria in Table \ref{tab:selcrit}.

\begin{table}[ht] \centering
{\small
\[ \begin{array}{c|cccccc}
    c & \rag > & \text{or } \rpg > & \text{or } \mpag > & \text{or } \mpg > & \text{or } \sag > & \text{or } \spg > \\ \hline
    < 10^{18}              & 1.10 & 2.66 & 1.37 & 1.57 & 1.31 & 1.90 \\
    \geq 10^{18}, < 2^{63} & 0.90 & 2.45 & 1.09 & 1.51 & 1.02 & 1.86 \\
    \geq 2^{63}            & \multicolumn{6}{l}{\text{no additional criteria}}
    \end{array} \]
}
    \caption{Selection criteria for published tables.}
\label{tab:selcrit}
\end{table}

This resulted in $ 1133 $ $ abc $-triples with $ c < 10^{18} $, $ 13302 $ $ abc $-triples with
$ 10^{18} \geq c < 2^{63} $ and $ 371 $ $ abc $-triples with $ c \geq 2^{63} $. Figure \ref{fig:scatplot}
shows some scatterplots of those data. The structures one might observe in there are probably mostly
caused by our somewhat randomly chosen selection criteria.

\begin{figure}[h]
    \centering
    \includegraphics[width=0.32\textwidth]{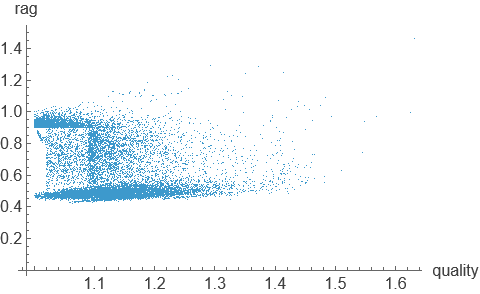}
    \includegraphics[width=0.32\textwidth]{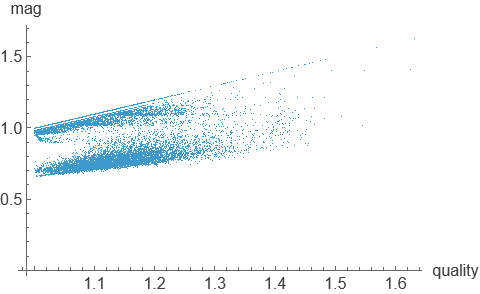}
    \includegraphics[width=0.32\textwidth]{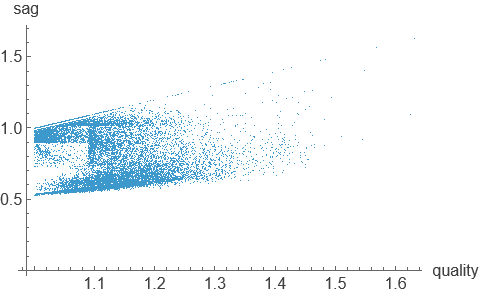} \\
    \includegraphics[width=0.32\textwidth]{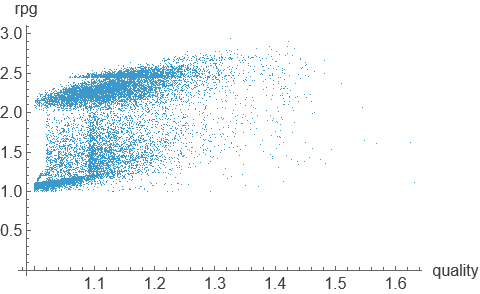}
    \includegraphics[width=0.32\textwidth]{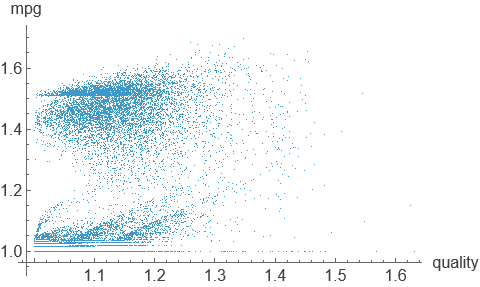}
    \includegraphics[width=0.32\textwidth]{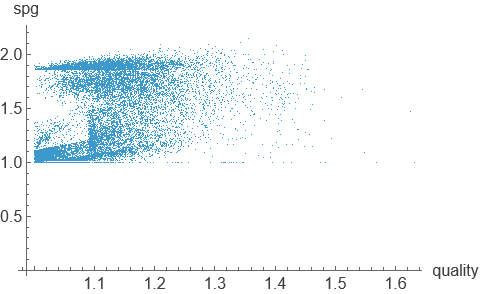} \\
    \includegraphics[width=0.32\textwidth]{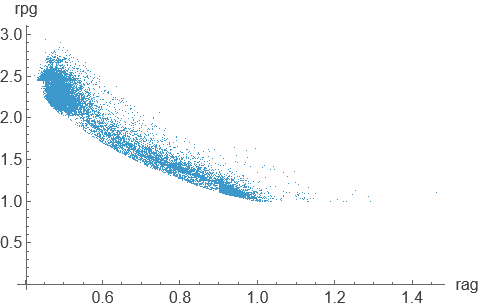}
    \includegraphics[width=0.32\textwidth]{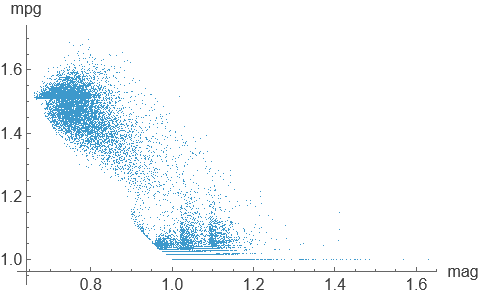}
    \includegraphics[width=0.32\textwidth]{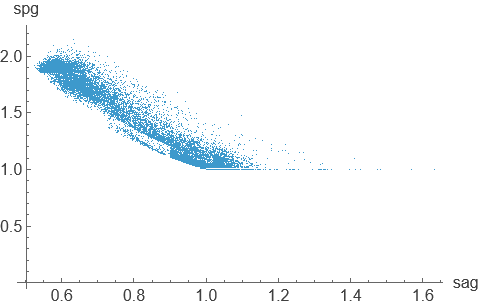} \\
    \includegraphics[width=0.32\textwidth]{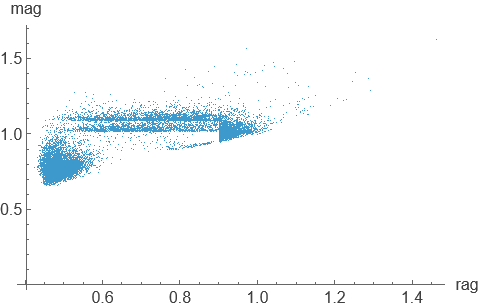}
    \includegraphics[width=0.32\textwidth]{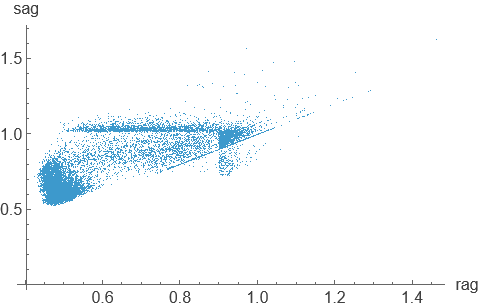}
    \includegraphics[width=0.32\textwidth]{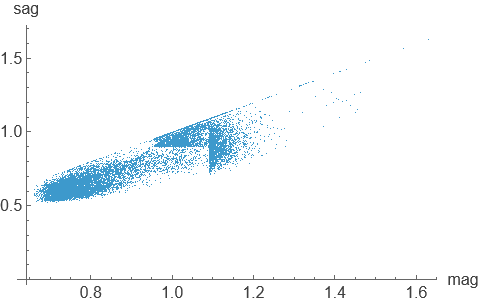} \\
    \caption{Scatterplots of observed quality, approximation and power gain data.}
    \label{fig:scatplot}
\end{figure}

\subsection*{Acknowledgements}
Thanks are due to Karsten M\"uller for his encouragement.

\bibliographystyle{plainurl}

\end{document}